\newcommand\Aoff{\RenewEnviron{commentA}{}}
\newtheorem{theorem}{Theorem}[section]
\newtheorem{proposition}[theorem]{Proposition}
\theoremstyle{definition}
\newtheorem{question}[theorem]{Question}
\newcommand{\Z}{{\mathbb Z}}
\newcommand{\C}{{\mathbb C}}
\newcommand{\Hom}{{\text{Hom}}}
\newcommand{\Spec}{{\text{Spec}}}
\newcommand{\Pic}{{\textup{Pic}}}
\newcommand{\Br}{\textup{Br}}
\newcommand{\Cl}{\textup{Cl}}
\title{Non-torsion Brauer groups in positive characteristic}
\author{Louis Esser}
\address{UCLA Mathematics Department, Box 951555, Los Angeles, CA 90095-1555}
\email{esserl@math.ucla.edu}
\subjclass[2020]{14F22}
\keywords{Brauer groups, \'{e}tale cohomology}
\begin{document}
\maketitle

\begin{abstract}
    Unlike the classical Brauer group of a field, the Brauer-Grothendieck group of a singular scheme need not be torsion. We show that there exist integral normal projective surfaces over a large field of positive characteristic with non-torsion Brauer group. In contrast, we demonstrate that such examples cannot exist over the algebraic closure of a finite field.
\end{abstract}

\section{Introduction}
One way of extending the notion of the classical Brauer group of a field to any scheme $X$ is by defining the Brauer-Grothendieck group $\Br(X) = H_{\text{\'{e}t}}^2(X,\mathbb{G}_m)$.  Just as for fields, this group is torsion for any regular integral noetherian scheme \cite[Corollaire 1.8]{grothendieck}.  However, this no longer holds for singular schemes.  Chapter 8 of \cite{brauer} lists several counterexamples.  For instance, if $R$ is the local ring of the vertex of a cone over a smooth curve of degree $d \geq 4$ in $\mathbb{P}^2_{\C}$, then $\Br(R)$ contains the additive group of $\C$.  There are then affine Zariski open neighborhoods of the vertex with non-torsion Brauer group \cite[Example 8.2.2]{brauer}.  However, the additive group of $k$ is torsion when $k$ has positive characteristic, so analogous constructions do not work there.  There are also reducible varieties in arbitrary characteristic with non-torsion Brauer group \cite[Section 8.1]{brauer}.  This leaves open the following question, communicated by Colliot-Th\'{e}l\`{e}ne and Skorobogatov in an unpublished draft of \cite{brauer}:

\begin{question}
If $X$ is an integral normal quasi-projective variety over a field $k$ of positive characteristic, is $\Br(X)$ a torsion group?
\end{question}

To analyze this question, we use a result concerning the Brauer group of a normal variety $X$ with only isolated singularities $p_1,\ldots,p_n$.  Suppose $X$ is defined over an algebraically closed field $k$ of arbitrary characteristic.  Let $K$ be the function field of $X$, $R_i = \mathcal{O}_{X,p_i}$ be the local ring at each singularity, and $R_i^{\text{h}}$ its henselization.  Then, $\Br(X)$ is given by the exact sequence (see \cite[Section 8.2]{brauer}, elaborating on \cite[\S 1, Remarque 11 (b)]{grothendieck}) 

\begin{equation}
0 \rightarrow \Pic(X) \rightarrow \Cl(X) \rightarrow \bigoplus^n_{i=1} \Cl(R_i^{\text{h}}) \rightarrow \Br(X) \rightarrow \Br(K).
\end{equation}

This sequence indicates that one way for $\Br(X)$ to be large is for a singularity to have a large henselian local class group with divisors that do not extend globally to $X$.  This idea is illustrated by a counterexample given by Burt Totaro, which shows that $\Br(X)$ is non-torsion for $X$ a hypersurface of degree $d \geq 3$ in $\mathbb{P}_k^4$ with a single node \cite[Proposition 8.2.3]{brauer}.  Here $k$ is any algebraically closed field with characteristic not $2$.

\begin{commentA}
The below is a summary of the example due to Totaro, which was included in previous versions of the paper.

suppose that $X \subset \mathbb{P}^4$ is a hypersurface of degree $d \geq 3$ with a single node $p$.  Then $Y = \text{Bl}_p(X)$ is a smooth, ample divisor in $\text{Bl}_p \mathbb{P}^4$.  By the Grothendieck-Lefschetz theorem for Picard groups, the restriction $\Pic(\text{Bl}_p \mathbb{P}^4) \rightarrow \Pic(Y)$ is an isomorphism.  

If $E$ is the exceptional divisor, then the sequence $0 \rightarrow \Z \cdot [E] \rightarrow \Pic(Y) \rightarrow \Cl(X) \rightarrow 0$ yields $\Cl(X) \cong \Z \cdot \mathcal{O}_X(1)$, so that the restriction map $\Cl(X) \rightarrow \Cl(R^{\text{h}})$ is zero.  However, since a threefold node is \'{e}tale-locally the cone over a smooth quadric surface, one can show that the henselian local class group contains a copy of $\Z$.  Thus, $\Br(X)$ is not torsion.

\end{commentA}

We will show that counterexamples to Question 1.1 exist in dimension $2$ if and only if $k$ is not the algebraic closure of a finite field.

\textbf{Acknowledgements.} I thank Burt Totaro for suggesting this problem to me and for his advice.  This work was partially supported by National Science Foundation grant DMS-1701237.

\section{A Surface Counterexample}
The following construction is taken from \cite{kollar}, Example 1.27.  Take a smooth cubic curve $D$ in the projective plane and a quartic curve $Q$ that meets it tranversally.  Let $Y = \text{Bl}_{q_1,\ldots,q_{12}} \mathbb{P}^2$, where $q_1,\ldots,q_{12}$ are the points of intersection.  On $Y$, the proper transform $C$ of $D$ satisfies $C^2 = -3$.  Unlike rational curves, not all negative self-intersection higher genus curves may be contracted to yield a projective surface.  Rather, the contraction might only exist as an algebraic space.  However, in this case, $C$ is contractible.

\begin{proposition}
There exists a normal projective surface $X$ and a proper birational morphism $Y \rightarrow X$ whose exceptional locus is exactly $C$.
\end{proposition}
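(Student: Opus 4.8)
The plan is to realize the contraction of $C$ as the morphism attached to a base-point-free line bundle on $Y$ that is trivial along $C$ and strictly positive on every other curve. Write $H$ for the pullback of the hyperplane class of $\mathbb{P}^2$ and $E_1,\dots,E_{12}$ for the exceptional curves of $Y\to\mathbb{P}^2$, so that $\operatorname{Pic}(Y)=\mathbb{Z}H\oplus\bigoplus_i\mathbb{Z}E_i$ and $C=3H-\sum_i E_i=-K_Y$. I would work with
\[
L \;=\; 4H-\sum_{i=1}^{12}E_i \;=\; -K_Y+H,
\]
which is exactly the class of the proper transform $\widetilde Q$ of the quartic $Q$, since $Q$ too passes through each $q_i$ once. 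One computes immediately that $L\cdot C=12-12=0$, that $L^2=16-12=4>0$, and that $\widetilde Q$ is in fact disjoint from $C$ because $Q$ meets $D$ transversally; in particular $L|_C\cong\mathcal O_C$. Granting that $|L|$ is base-point free and that $L\cdot\Gamma>0$ for every irreducible curve $\Gamma\ne C$, the associated morphism $\varphi_{|L|}\colon Y\to\mathbb{P}^N$ is generically finite (as $L^2>0$) and contracts exactly $C$; its Stein factorization $Y\xrightarrow{\,f\,}X\xrightarrow{\,g\,}\varphi_{|L|}(Y)$ then produces a surface $X$ that is normal (since $f_*\mathcal O_Y=\mathcal O_X$) and projective (since $g$ is finite), together with a proper birational morphism $f\colon Y\to X$ whose exceptional locus is precisely $C$ by Zariski's main theorem. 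I would stress that this argument is characteristic-free, avoiding the base-point-free theorem --- which matters for the positive-characteristic results later in the paper.

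Two verifications remain. First, base-point-freeness of $|L|$, which in particular makes $L$ nef: global sections of $L$ are the quartic forms vanishing on the reduced complete intersection $D\cap Q$, and in degree $4$ that ideal equals $f_D\cdot S_1\oplus\mathbb{C}\,f_Q$ with $S_1$ the space of linear forms, whose common zero set in $\mathbb{P}^2$ is exactly $\{q_1,\dots,q_{12}\}$; this leaves no base points off $\bigcup_i E_i$, on $C\setminus\bigcup_i E_i$ the nonvanishing section $\widetilde Q$ suffices, and on each $E_i\cong\mathbb{P}^1$ (with $L\cdot E_i=1$) the sections coming from $\widetilde Q$ and from a quartic $f_D\cdot\ell$ restrict to sections of $\mathcal O_{\mathbb{P}^1}(1)$ vanishing at the two points of $E_i$ determined by the tangent directions of $Q$ and of $D$ at $q_i$. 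Second, with $L$ now nef it suffices to exclude $L\cdot\Gamma=0$ for $\Gamma\ne C$: such a $\Gamma$ is not one of the $E_i$, so it is the proper transform of a plane curve $\Gamma_0$ of degree $d\ge1$ with multiplicity $m_i$ at $q_i$ and $\Gamma_0\ne D$, whence comparison with the smooth cubic $D$ would give $3d=D\cdot\Gamma_0\ge\sum_i(\Gamma_0\cdot D)_{q_i}\ge\sum_i m_i=4d$, which is absurd.

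The main obstacle I foresee is the base-point-freeness along the $E_i$, since that is exactly where the transversality of $D\cap Q$ enters --- it forces the two distinguished points on each $E_i$ to be distinct --- and it relies on the description of the degree-$4$ part of the ideal of $D\cap Q$ above. If that becomes cumbersome, an alternative is to contract $C$ first to a normal algebraic space $X$ directly (legitimate since the intersection form on $C$ is the negative-definite $(-3)$, by Artin's criterion), then descend $L$ to a line bundle $L_X$ on $X$ using $L|_C\cong\mathcal O_C$ and $f_*\mathcal O_Y=\mathcal O_X$, and finally invoke the Nakai--Moishezon criterion: $L_X$ is nef because $L$ is, and $L_X\cdot\Gamma_X>0$ for every curve $\Gamma_X\subset X$ by the computation above, so $L_X$ is ample and $X$ is a projective scheme.
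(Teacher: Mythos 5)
Your proposal is correct and follows essentially the same route as the paper: both contract $C$ via the base-point-free linear system $|L|$ with $L = 4H - \sum_i E_i = H + C$, check that $C$ is the only curve on which $L$ has degree zero, and then pass to the normal projective surface obtained from the image (your Stein factorization and the paper's normalization of the image are interchangeable here). The only cosmetic differences are your explicit identification of $H^0(L)$ with the degree-$4$ part of $(f_D, f_Q)$ and your B\'ezout argument for $L \cdot \Gamma > 0$, where the paper instead notes that $H \cdot C' > 0$ forces $C \cdot C' < 0$ and hence $C' = C$.
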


\begin{proof}
The Picard group of $Y$ is the free abelian group on $H$, the pullback of a general line in $\mathbb{P}^2$, and the exceptional divisors $E_1,\ldots,E_{12}$.  Then, we claim that the line bundle $L := \mathcal{O}_Y(4H - \sum_i E_i) = \mathcal{O}_Y(H + C)$ defines a basepoint-free linear system on $Y$.  Indeed, no point outside of the $E_i$ can be a base point, and the proper transforms of both $D$ + a line and $Q$ belong to the linear system.  These don't intersect on the exceptional locus by the transversality assumption.  Therefore, the system defines a morphism from $Y$ to projective space with image $X'$.  This morphism is birational because we have an injective map $H^0(Y,\mathcal{O}_Y(H)) \hookrightarrow H^0(Y,L)$ and $\mathcal{O}_Y(H)$ is the pullback of a very ample line bundle on $\mathbb{P}^2$.

The exceptional locus of the morphism $Y' \rightarrow X'$ is precisely the union of the irreducible curves in $Y$ on which $L$ has degree zero. If $C'$ an irreducible curve with $C' \cdot (H + C) = 0$, clearly $C'$ is not supported on the $E_i$, or the intersection would be positive.  Therefore, $H \cdot C' > 0$, meaning $C \cdot C' < 0$.  This means $C' = C$, so the exceptional locus is the curve $C$, which is mapped to a point. Thus, $X'$ is a surface birational to $Y$ and $Y \rightarrow X'$ is an isomorphism away from $C$.  Finally, passing to the normalization $X$ of $X'$, we may assume $X$ is a normal projective surface; the normalization will also be an isomorphism away from $C$, and the image of $C$ will again be a point in $X$.
\end{proof}

The resulting singularity $p$ of $X$ has minimal resolution with exceptional set exactly $C$, a smooth elliptic curve.  Singularities satisfying this condition are \textit{simple elliptic singularities}.  Over $\C$, such singularities are completely classified.  A simple elliptic singularity with $C^2 = -3$ is known as an $\tilde{E}_6$ singularity, and is complex analytically isomorphic to a cone over $C$ \cite{saito}.  Here, we present a computation of the henselian local class group $\Cl(R^\text{h})$ of the singularity that works in any characteristic.

Consider the pullback of the desingularization $Y \rightarrow X$ to a ``henselian neighborhood":

\begin{equation*}
\begin{tikzcd}
Y^{\text{h}} \arrow[r] \arrow[d] & Y \arrow[d] \\
\Spec(R^{\text{h}}) \arrow[r] & X .
\end{tikzcd}
\end{equation*}

The scheme $Y^{\text{h}}$ is regular and $Y^{\text{h}} \setminus C \cong \Spec(R^{\text{h}}) \setminus \{\mathfrak{m}\}$, so we have an exact sequence
 $0 \rightarrow \Z \cdot [C] \rightarrow \Pic (Y^{\text{h}}) \rightarrow \Cl(R^{\text{h}}) \rightarrow 0$.  Here, the first map is injective since $\mathcal{O}_{Y^{\text{h}}}(C)$ has nonzero degree on $C$.  It suffices, therefore, to compute $\Pic (Y^{\text{h}})$.  To do so, we'll first consider infinitesimal neighborhoods of $C$ in $Y$.
 
 \begin{commentA}
 Detailed explanation of why $Y^{\text{h}}$ is a regular scheme: essentially, this is because henselization behaves well with regularity and tensor products.  We first base change to $R$; the Zariski local rings of $Y$ are regular, so this gives $Y_R \rightarrow \Spec(R)$ with $Y_R$ regular.  Then, to cover $Y^{\text{h}}$, one can consider affine opens in $Y_R$.  For any such $\Spec(S) \subset R$, we have a ring map $R \rightarrow S$ and the corresponding open in $Y^{\text{h}}$ is $R^{\text{h}} \otimes_R S$.  The local rings there can be identified with the henselizations of the local rings of $S$, which are regular (see, e.g. \cite[0BSK]{stacks}).
 \end{commentA}
 
The sequence of infinitesimal neighborhoods $C = C_1 \subset C_2 \subset \cdots$ is defined by powers of the ideal sheaf $\mathcal{I}_C$. Notably, these $C_n$ are the same regardless of whether we consider them inside $Y$ or inside the henselian neighborhood $Y^{\text{h}}$. The normal bundle to $C$ in $Y$ gives obstructions to extending line bundles to successive neighborhoods, but we'll show that all line bundles extend uniquely.  The group $\varprojlim_n \Pic(C_n)$ in the proposition below is also the Picard group of the formal neighborhood of $C$ in $Y$.
 
\begin{proposition}
The restriction map $\varprojlim_n \Pic(C_n) \rightarrow \Pic(C)$ is an isomorphism.
\end{proposition}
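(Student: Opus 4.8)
The plan is to analyze the obstruction theory for extending line bundles along the tower $C = C_1 \subset C_2 \subset \cdots$ and show that every obstruction group vanishes, so that the inverse system $\{\Pic(C_n)\}$ is both surjective onto $\Pic(C)$ and has trivial kernel. For each $n$, the short exact sequence of sheaves of abelian groups
\begin{equation*}
0 \rightarrow \mathcal{I}_C^{n}/\mathcal{I}_C^{n+1} \rightarrow \mathcal{O}_{C_{n+1}}^\times \rightarrow \mathcal{O}_{C_n}^\times \rightarrow 0
\end{equation*}
(where the left term, a square-zero ideal, is viewed additively via $1+x \mapsto x$) gives a long exact cohomology sequence
\begin{equation*}
H^1(C, \mathcal{I}_C^{n}/\mathcal{I}_C^{n+1}) \rightarrow \Pic(C_{n+1}) \rightarrow \Pic(C_n) \rightarrow H^2(C, \mathcal{I}_C^{n}/\mathcal{I}_C^{n+1}).
\end{equation*}
Since $C$ is a curve, the $H^2$ term vanishes automatically, so each restriction $\Pic(C_{n+1}) \rightarrow \Pic(C_n)$ is surjective. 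The kernel is a quotient of $H^1(C, \mathcal{I}_C^{n}/\mathcal{I}_C^{n+1})$. Now $\mathcal{I}_C/\mathcal{I}_C^2$ is the conormal bundle $N_{C/Y}^\vee$, and since $C^2 = -3$ on $Y$ we have $\deg N_{C/Y}^\vee = 3 > 0$; hence $\mathcal{I}_C^n/\mathcal{I}_C^{n+1} \cong (N_{C/Y}^\vee)^{\otimes n}$ is a line bundle of degree $3n$ on the elliptic curve $C$. A line bundle of positive degree on a smooth genus-one curve has vanishing $H^1$ (by Riemann–Roch and Serre duality, or by degree reasons), so $H^1(C, \mathcal{I}_C^n/\mathcal{I}_C^{n+1}) = 0$ for all $n \geq 1$. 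Therefore every map in the tower is an isomorphism, and passing to the inverse limit gives $\varprojlim_n \Pic(C_n) \xrightarrow{\sim} \Pic(C)$.

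The one point requiring care is the interchange of the inverse limit with the exactness: I need that $\varprojlim_n \Pic(C_n) \rightarrow \Pic(C_1) = \Pic(C)$ is injective, not merely that each individual transition map is. This follows because an element of the kernel of the limit map restricts to the trivial bundle on every $C_n$, and the identifications above show the transition maps $\Pic(C_{n+1}) \xrightarrow{\sim} \Pic(C_n)$ are isomorphisms — so the kernel is literally zero at each finite stage, hence zero in the limit. (Equivalently, the system satisfies Mittag–Leffler trivially since the transition maps are isomorphisms, so no $\varprojlim^1$ obstruction arises.) Surjectivity of the limit map likewise follows: given $\mathcal{L} \in \Pic(C)$, one lifts it step by step — uniquely, by the vanishing of the $H^1$ terms — to a compatible system $(\mathcal{L}_n) \in \varprojlim_n \Pic(C_n)$.

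The main obstacle, such as it is, is purely bookkeeping: setting up the exponential-type sequence $0 \rightarrow \mathcal{I}_C^n/\mathcal{I}_C^{n+1} \rightarrow \mathcal{O}_{C_{n+1}}^\times \rightarrow \mathcal{O}_{C_n}^\times \rightarrow 0$ correctly in arbitrary characteristic — where one must use that $\mathcal{I}_C^n/\mathcal{I}_C^{n+1}$ is a square-zero ideal so that $x \mapsto 1+x$ is a group isomorphism onto the kernel of $\mathcal{O}_{C_{n+1}}^\times \to \mathcal{O}_{C_n}^\times$, with no need to divide by $n!$ — and identifying $\mathcal{I}_C^n/\mathcal{I}_C^{n+1}$ with $(N^\vee_{C/Y})^{\otimes n}$. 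The latter uses that $C$ is a Cartier divisor in the regular surface $Y$, so $\mathcal{I}_C$ is an invertible sheaf and its powers behave as expected. Everything else is the standard observation that positive-degree line bundles on an elliptic curve are acyclic in degree one, together with the vanishing of $H^2$ on a curve. Finally, I note the identification of $\varprojlim_n \Pic(C_n)$ with the Picard group of the formal neighborhood, which is the formal-functions/Grothendieck existence statement and is exactly what feeds into computing $\Pic(Y^{\mathrm{h}})$ in the next step.
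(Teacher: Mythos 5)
Your proposal is correct and follows essentially the same route as the paper: the square-zero thickening exact sequence relating $\Pic(C_{n+1})$ and $\Pic(C_n)$, the identification $\mathcal{I}_C^n/\mathcal{I}_C^{n+1} \cong \mathcal{O}_C(-nC)$ of degree $3n > 0$, and the vanishing of higher cohomology of positive-degree line bundles on the genus-one curve $C$, so that every transition map is an isomorphism and the limit statement is immediate. Your extra remarks on the inverse limit (Mittag--Leffler) and on the characteristic-free truncated exponential are fine but not a departure from the paper's argument.
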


\begin{proof}
It's enough to show that the maps $\Pic(C_{n+1}) \rightarrow \Pic(C_n)$ are all isomorphisms for $n \geq 1$.  Each extension $C_n \subset C_{n+1}$ is a first-order thickening, since $C_n$ is defined in $C_{n+1}$ by the square-zero ideal sheaf $\mathcal{I}_C^n/\mathcal{I}_C^{n+1}$.  Associated to such a thickening is a long exact sequence in cohomology \cite[0C6Q]{stacks}

$$\cdots \rightarrow H^1(C,\mathcal{I}_C^n/\mathcal{I}_C^{n+1}) \rightarrow \Pic (C_{n+1}) \rightarrow \Pic (C_n) \rightarrow H^2(C,\mathcal{I}_C^n/\mathcal{I}_C^{n+1}) \rightarrow \cdots .$$

We may take the outer cohomology groups to be over $C$ since the underlying topological spaces are the same.  As sheaves of abelian groups on $C$, we have $\mathcal{I}_C^n/\mathcal{I}_C^{n+1} \cong \mathcal{O}_C(-nC)$, a multiple of the conormal bundle.  But $C^2 = -3$ in $Y$ so this last bundle has degree $3n > 0$.  Since $C$ is genus $1$, the higher cohomology of $\mathcal{O}_C(-nC)$ vanishes and $\Pic (C_{n+1}) \rightarrow \Pic (C_n)$ is an isomorphism for all $n$. 
\end{proof}

\begin{commentA}
Detailed explanation as to why outer cohomology groups are over $C$, and not a thickening: we consider every sheaf in the sequence
$$0 \rightarrow \mathcal{I}_C^n/\mathcal{I}_C^{n+1} \rightarrow \mathcal{O}_{C_{n+1}}^* \rightarrow \mathcal{O}_{C_n}^* \rightarrow 0$$
to be a sheaf of an abelian groups, and forget the module structure (this doesn't change cohomology).  All the $C_n$ have the same underlying topological space so these sheaves are all on that topological space.  All that changes is the ringed space structure.
\end{commentA}

Now, we need only compare $\varprojlim_n \Pic(C_n)$ to $\Pic(Y_h)$.  Using the Artin approximation theorem \cite[Theorem 3.5]{artin}, the map $\Pic(Y^{\text{h}}) \rightarrow \varprojlim_n \Pic(C_n)$ is injective with dense image.  However, the topology of the latter group is discrete in this setting because each group of the inverse limit is $\Pic(C)$.  Therefore, the map is surjective also and $\Pic(Y^{\text{h}}) \cong \Pic(C)$.

\begin{commentA}
Detailed explanation: The formulation of Artin approximation in \cite{artin} is as follows: let $A$ be a local henselian ring with maximal ideal $\mathfrak{m}$ ($A$ here is the henselization of a finite type algebra over the field $k$) and let $\hat{A}$ be its completion.  Then for any functor $F: \{A-\text{algebras}\} \rightarrow \{\text{sets}\}$ that is locally of finite presentation, any $\hat{\xi} \in F(\hat{A})$, and any integer $n$, there exists $\xi \in F(A)$ such that

$$\hat{\xi} \equiv \xi \mod \mathfrak{m}^n.$$

In the proof of theorem 3.5, it is noted that $H^1(X \times_{\Spec(A)} -, \mathbb{G}_m)$ is a functor locally of finite presentation for $X \rightarrow A$ proper.  Hence in our setting, we can approximate any line bundle on the formal neighborhood up to $C_n$.  Since all the Picard groups are the same, this demonstrates surjectivity.
\end{commentA}

\begin{theorem}
Let $k$ be an algebraically closed field that is not the algebraic closure of a finite field and $X$ be the surface defined in the proof of Proposition 2.1. Then, $\Br(X)$ is non-torsion. 
\end{theorem}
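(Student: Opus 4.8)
The plan is to read off a non-torsion subgroup of $\Br(X)$ from the exact sequence~(1) of the introduction, applied to the unique singular point $p$ of $X$:
\begin{equation*}
0 \to \Pic(X) \to \Cl(X) \to \Cl(R^{\mathrm{h}}) \to \Br(X) \to \Br(K).
\end{equation*}
Exactness gives an embedding $\Cl(R^{\mathrm h})/\im\bigl(\Cl(X)\bigr) \hookrightarrow \Br(X)$, so it suffices to show that this cokernel is not torsion; and for that it is enough to check that $\Cl(X)$ is finitely generated while $\Cl(R^{\mathrm h})$ has infinite $\Z$-rank.

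The group $\Cl(X)$ is finitely generated: since $p$ is the only singular point and $Y\to X$ is an isomorphism over $X\setminus\{p\}$, we get $X\setminus\{p\}\cong Y\setminus C$, hence $\Cl(X)=\Cl(Y\setminus C)=\Pic(Y)/\Z[C]\cong\Z^{12}$, using $\Pic(Y)=\Z H\oplus\bigoplus_{i=1}^{12}\Z E_i$. For $\Cl(R^{\mathrm h})$, I would combine the sequence $0\to\Z[C]\to\Pic(Y^{\mathrm h})\to\Cl(R^{\mathrm h})\to 0$ with the isomorphism $\Pic(Y^{\mathrm h})\cong\Pic(C)$ of Propositions 2.2--2.3 to obtain $\Cl(R^{\mathrm h})\cong\Pic(C)/\Z[\mathcal O_C(C)]$. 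Since $\deg\mathcal O_C(C)=C^2=-3\ne 0$ and $\Pic(C)\cong\Z\oplus\Pic^0(C)$ (here $C$ is a genus-one curve with a rational point, $k$ being algebraically closed), there is an exact sequence $0\to\Pic^0(C)\to\Cl(R^{\mathrm h})\to\Z/3\Z\to 0$; in particular $\operatorname{rank}_\Z\Cl(R^{\mathrm h})=\operatorname{rank}_\Z\Pic^0(C)=\operatorname{rank}_\Z C(k)$, after fixing an origin to identify $\Pic^0(C)$ with the Mordell--Weil group $C(k)$.

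Everything then reduces to showing that $C(k)$ has infinite rank whenever $k$ is algebraically closed and not the algebraic closure of a finite field; this is the arithmetic core of the theorem, and the one place the hypothesis on $k$ is used (over $\overline{\F_p}$ the group $C(\overline{\F_p})=\varinjlim_n C(\F_{p^n})$ is torsion, which is why the construction must fail there, matching the ``only if'' direction mentioned in the introduction). I would argue in cases. If $k$ is uncountable, then $|C(k)|=|k|$ is uncountable while the torsion subgroup of $C(k)$ embeds in $(\Q/\Z)^2$ and is therefore countable, so $C(k)$ has uncountable, in particular infinite, rank. If $k$ is countable of characteristic $0$, then $k=\overline{\Q}$ and $C(\overline{\Q})$ has infinite rank by the Frey--Jarden theorem. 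If $k$ is countable of characteristic $p>0$ with $k\ne\overline{\F_p}$, then $k$ has positive transcendence degree over $\F_p$; here I would use the freedom in Proposition 2.1 to take $D$ (hence $C$) with $j$-invariant in $\overline{\F_p}$, so that $C$ is the base change of an elliptic curve $C_0$ over $\overline{\F_p}$, then embed the function field $\overline{\F_p}(C_0)$ into $k$ over $\overline{\F_p}$ (possible as it has transcendence degree one over $\overline{\F_p}$) and pull in the tautological point $\delta\in C_0(\overline{\F_p}(C_0))$ coming from $\mathrm{id}_{C_0}$, whose multiples are the pairwise distinct isogenies $[n]\colon C_0\to C_0$, so $\delta$ has infinite order. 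Repeating the construction over function fields of curves whose Jacobian is isogenous to a large power of $C_0$ gives points of arbitrarily large rank; alternatively, choosing $Q$ also over $\overline{\F_p}$ forces $\im(\Cl(X)\to\Cl(R^{\mathrm h}))\cap\Pic^0(C)$ to be torsion, so that $\operatorname{rank}C(k)\ge 1$ already suffices to conclude.

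Granting that $\Cl(R^{\mathrm h})$ has infinite rank, its quotient by the finitely generated subgroup $\im(\Cl(X)\to\Cl(R^{\mathrm h}))$ still has infinite rank and hence is non-torsion, so $\Br(X)$ is non-torsion. The main obstacle I anticipate is the last case: producing an elliptic curve with non-torsion Mordell--Weil group over a countable field of positive characteristic. This is exactly where the split between $\overline{\F_p}$ and every other algebraically closed field is decided, and its resolution relies on the flexibility in choosing $D$ and $Q$ together with either a function-field analogue of Frey--Jarden or the existence of curves over $\overline{\F_p}$ with Jacobian isogenous to a power of a fixed elliptic curve.
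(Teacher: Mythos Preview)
Your overall strategy matches the paper's exactly: use sequence~(1), identify $\Cl(X)\cong\Z^{12}$ and $\Cl(R^{\mathrm h})\cong\Pic(C)/\Z[\mathcal O_C(C)]$, and conclude from $C(k)$ having infinite rank. The only substantive difference is in this last step. The paper does not split into cases at all: the Frey--Jarden theorem \cite[Theorem~10.1]{frey} already asserts that $A(k)$ has infinite rank for \emph{every} abelian variety $A$ over \emph{any} algebraically closed field $k$ that is not $\overline{\F_p}$, in all characteristics. So your uncountable case and your $\overline{\Q}$ case are both special instances of the single citation the paper uses, and your problematic third case (countable, characteristic $p$, $k\neq\overline{\F_p}$) is covered by the same reference with no further work.

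Your workaround in that third case---choosing $D$ and $Q$ defined over $\overline{\F_p}$ so that the image of $\Cl(X)$ in $\Pic^0(C)$ is torsion, whence rank $\geq 1$ already suffices---is correct and pleasant, but it proves a weaker statement: it yields non-torsion $\Br(X)$ only for \emph{some} choice of $D,Q$, whereas the paper's argument (via the full Frey--Jarden theorem) works for \emph{every} choice in Proposition~2.1. The sketch for producing arbitrarily many independent points via Jacobians isogenous to powers of $C_0$ is left vague and is in any case unnecessary once you know Frey--Jarden applies beyond $\overline{\Q}$.
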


\begin{proof}
From the above, we have the identification $\Cl(R^{\text{h}}) \cong \Pic(Y^{\text{h}}) / \Z \cdot \mathcal{O}_{Y^{\text{h}}}(C) \newline \cong \Pic(C) / \Z \cdot \mathcal{O}_C(C)$.  Since $\deg_C \mathcal{O}_C(C) = 3$, the class group is then an extension of $\Z/3$ by $\Pic^0(C) \cong C(k)$, where $C(k)$ is the group of $k$-rational points of the elliptic curve $C$ with a chosen identity point.  Since $k \neq \overline{\mathbb{F}}_p$, $C(k)$ has infinite rank \cite[Theorem 10.1]{frey}. Note that in contrast, $C(k)$ is torsion for an elliptic curve $C$ over $\overline{\mathbb{F}}_p$, because every point is defined over $\mathbb{F}_{p^m}$ for some $m$.

However, the global class group of $X$ is quite small: $\Cl(Y) = \Pic(Y) \cong \Z^{13}$ since $Y$ is the blow up of $\mathbb{P}^2$ in $12$ points and $\Cl(X) \cong \Cl(Y) / \Z \cdot [C]$.  Therefore, the cokernel of the restriction map $\Cl(X) \rightarrow \Cl(R^{\text{h}})$ in (1) contains non-torsion elements, so $\Br(X)$ does too.
\end{proof}

To complement the above result, we also prove:

\begin{theorem}
Suppose that $X$ is an integral normal surface over the algebraic closure $k = \overline{\mathbb{F}}_p$ of a finite field.  Then $\Br(X)$ is torsion.
\end{theorem}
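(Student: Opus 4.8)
\emph{Proof strategy.}
The plan is to reverse the computation that precedes Theorem~2.3: over $k=\overline{\mathbb F}_p$ every group that was ``large'' there becomes torsion, forced by finiteness over a finite field. A normal surface is regular in codimension one, so it has only finitely many singular points $p_1,\dots,p_n$ and the exact sequence~(1) applies, with $K$ the function field of $X$ and $R_i=\mathcal O_{X,p_i}$. Reading~(1), $\Br(X)$ fits in a short exact sequence $0\to Q\to\Br(X)\to W\to0$ with $Q$ a quotient of $\bigoplus_i\Cl(R_i^{\mathrm h})$ and $W$ a subgroup of $\Br(K)$; since the Brauer group of any field is torsion, it is enough to prove that each $\Cl(R_i^{\mathrm h})$ is torsion. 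I would fix a singular point $p=p_i$ with $R=\mathcal O_{X,p}$, choose a resolution $Y\to X$, and pull it back to $\pi\colon Y^{\mathrm h}\to\Spec(R^{\mathrm h})$ as in the discussion before Theorem~2.3: $Y^{\mathrm h}$ is regular, the reduced exceptional divisor $E=E_1\cup\dots\cup E_r$ is a projective curve over $k$, and by the same excision argument $\Cl(R^{\mathrm h})\cong\Pic(Y^{\mathrm h})/\langle[E_1],\dots,[E_r]\rangle$. It remains to understand $\Pic(Y^{\mathrm h})$.

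The next step is to stabilize the Picard groups of the infinitesimal neighbourhoods of $E$. Since the intersection matrix $M=(E_i\cdot E_j)$ is negative definite with nonnegative off-diagonal entries, there is an effective divisor $D=\sum_j a_jE_j$ with every $a_j\geq1$ and $D\cdot E_i<0$ for all $i$; equivalently $\mathcal O_{Y^{\mathrm h}}(-D)$ is $\pi$-ample, hence restricts to an ample line bundle on each thickening $D_m:=\Spec(\mathcal O_{Y^{\mathrm h}}/\mathcal O(-mD))$ of $E$. Applying the first-order thickening sequence used in the proof of Proposition~2.2 to $D_m\subset D_{m+1}$, the only possible obstructions lie in $H^1(D_1,\mathcal O_{D_1}(-mD))$ and in the $H^2$ of that sheaf; the $H^2$ vanishes because the support is a curve, and the $H^1$ vanishes for $m\gg0$ by Serre vanishing. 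Hence $\Pic(D_{m+1})\to\Pic(D_m)$ is an isomorphism for all $m\geq m_0$. The $D_m$ are cofinal among infinitesimal neighbourhoods of $E$, so the Artin approximation argument used before Theorem~2.3 — now applied to an inverse system that is eventually constant, so that its limit is again discrete — gives $\Pic(Y^{\mathrm h})\cong\varprojlim_m\Pic(D_m)\cong\Pic(D_{m_0})$.

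The last step uses finiteness over $\overline{\mathbb F}_p$. I would consider $\rho\colon\Pic(Y^{\mathrm h})\to\mathbb Z^r$, $L\mapsto(L\cdot E_i)_i$, which factors through the restriction map $\Pic(Y^{\mathrm h})\to\Pic(E)$ followed by the multidegree map $\Pic(E)\to\mathbb Z^r$. The kernel of $\rho$ is torsion: it is an extension of a subgroup of $\Pic^0(E)$ by the kernel of $\Pic(Y^{\mathrm h})\to\Pic(E)$. Here $\Pic^0(E)$ is torsion because $\Pic^0_{E/k}$ is a group scheme of finite type over $\overline{\mathbb F}_p$ — defined with its group law over some $\mathbb F_q$, over which a finite-type scheme has finitely many points in each finite extension — so $\Pic^0(E)=\Pic^0_{E/k}(\overline{\mathbb F}_p)$ is an increasing union of finite groups; and the kernel of $\Pic(Y^{\mathrm h})\to\Pic(E)$ equals that of $\Pic(D_{m_0})\to\Pic(E)$, an iterated extension of finitely many coherent cohomology groups $H^1(E,\cdot)$, each an $\mathbb F_p$-vector space, hence $p$-power torsion. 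On the other hand $\rho([E_j])$ is the $j$-th column of $M$, which, as $\det M\neq0$, spans a subgroup of finite index in $\rho(\Pic(Y^{\mathrm h}))\subseteq\mathbb Z^r$. Therefore $\Cl(R^{\mathrm h})=\Pic(Y^{\mathrm h})/\langle[E_j]\rangle$ fits in $0\to T\to\Cl(R^{\mathrm h})\to F\to0$ with $F=\rho(\Pic(Y^{\mathrm h}))/\langle\rho([E_j])\rangle$ finite and $T$ a quotient of the torsion group $\ker\rho$; so $\Cl(R^{\mathrm h})$, and hence $\Br(X)$, is torsion.

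I expect the stabilization step to be the main obstacle, as it is precisely where the argument must improve on the $\tilde{E}_6$ computation preceding Theorem~2.3. There the conormal bundle of the single smooth elliptic curve had positive degree, making Proposition~2.2 immediate; for a general resolution the exceptional divisor can be reducible, singular, and of higher genus, and the reduced thickenings $nE$ need not have $\mathcal O(-nE)$ ample on $E$, so one is forced to run the infinitesimal analysis along powers of a well-chosen $\pi$-ample divisor $D$ supported on $E$. The delicate points to pin down are the existence of such a $D$ (the Stieltjes-matrix property of $-M$, using that the exceptional divisor of a surface singularity is connected) and the verification that the Artin-approximation comparison of Proposition~2.2 still applies when the inverse system is merely pro-constant. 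Everything past stabilization is soft — just the fact that Picard groups of proper schemes over $\overline{\mathbb F}_p$ have torsion identity component.
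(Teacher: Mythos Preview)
Your proposal is correct and follows essentially the same route as the paper: reduce via sequence~(1) to showing each $\Cl(R_i^{\mathrm h})$ is torsion, choose an effective exceptional divisor $D$ with $\mathcal O(-D)$ relatively ample (using negative definiteness of the intersection form), stabilize $\Pic$ of the thickenings by Serre vanishing, identify $\Pic(Y^{\mathrm h})$ with the stabilized group via Artin approximation, and conclude that its free rank equals the number of exceptional components. The only difference is cosmetic, in the final bookkeeping: the paper decomposes $\Pic(Z)$ component-by-component via a partial normalization $\bar Z\to Z$ (picking up copies of $(k,+)$, $(k,*)$, and $\Pic^0(F_j)$ for each smooth $F_j$), whereas you invoke $\Pic^0_{E/k}$ for the whole reduced fiber at once---which is slightly slicker and does not require an SNC resolution.
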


\begin{proof}
The strategy is similar to the previous theorem. Here, the crucial fact is that all possible ``building blocks" of the henselian local class group - abelian varieties over $k$, the additive group of $k$, and the multiplicative group of $k$ - are torsion.

Since the singularities of a normal surface are isolated, we may apply the exact sequence (1). The group $\Br(K)$ is always torsion, so if we can prove $\oplus_{i=1}^n \Cl(R_i^{\text{h}})$ is as well, the result follows.  Let $Y \rightarrow X$ be a desingularization. We focus on the base change $\pi: Y^{\text{h}} \rightarrow \Spec(R^{\text{h}})$ to the henselian local ring at just one singular point $p$.  Let $E = \pi^{-1}(p)$ be the scheme-theoretic fiber.  We may choose $Y$ such that $E_{\text{red}}$ is a union of irreducible curves $F_j$ which are smooth and meet pairwise transversely, with no three containing a common point.

The following argument is due to Artin \cite{contract}. Suppose $G$ is the free abelian group of divisors supported on $E$, and consider the map $\alpha: \Pic(Y^{\text{h}}) \rightarrow \Hom(G,\Z)$ given by $L \mapsto (D \mapsto D \cdot L)$.  This restricts to a map $\alpha|_G: G \rightarrow \Hom(G,\Z)$ that is injective because the intersection matrix of the curves $F_j$ is negative definite.  Since $G$ and $\Hom(G,\Z)$ are free abelian groups of equal rank, $G \rightarrow \Hom(G,\Z)$ also has finite cokernel.  This allows us to find an effective Cartier divisor $Z = \sum_j r_j F_j$ with all $r_j > 0$ such that $\alpha(Z) = \alpha(-H)$ for an ample line bundle $H$ on $Y^{\text{h}}$ \cite[p. 491]{contract}.  If we restrict this Cartier divisor to the scheme associated to $Z$, the resulting line bundle $\mathcal{O}_Z(-Z)$ has positive degree on every irreducible component of $Z$, so it is ample.  We'll examine infinitesimal neighborhoods of the closed subscheme $Z$ in $Y^{\text{h}}$.

As before, for every $n \geq 1$, there is an exact sequence

$$\cdots \rightarrow H^1(Z,\mathcal{O}_Z(-nZ)) \rightarrow \Pic(Z_{n+1}) \rightarrow \Pic(Z_n) \rightarrow H^2(Z,\mathcal{O}_Z(-nZ)) \rightarrow \cdots.$$

Because $\dim(Z) = 1$, the last group is always zero.  Since $\mathcal{O}_Z(-Z)$ is ample, the first group is zero for $n \gg 0$ by Serre vanishing, which holds on any projective scheme \cite[Theorem II.5.2]{hartshorne}.  Therefore, the inverse limit $\varprojlim_n \Pic(Z_n)$ is constructed as a finite series of extensions of $\Pic(Z)$ by finite-dimensional $k$-vector spaces.  Applying Artin approximation, we have that $\Pic(Y^{\text{h}}) \rightarrow \varprojlim_n \Pic(E_n)$ is injective with dense image.  However, for large $n$, the scheme $E_n$ is nested between two infinitesimal neighborhoods of $Z$, where all restrictions of Picard groups are surjective (use a similar exact sequence to the above, e.g. \cite[09NY]{stacks}).  It follows that $\varprojlim_n \Pic(E_n) \cong \varprojlim_n \Pic(Z_n)$ and that both have the discrete topology, so $\Pic(Y^{\text{h}}) \cong \varprojlim_n \Pic(Z_n)$.  

Next, let $\bar{Z}$ be the disjoint union of the schemes $r_j F_j$, where $r_j F_j$ is the subscheme of $Y^{\text{h}}$ cut out by the ideal sheaf of $F_j$ to the power $r_j$.  Then $f: \bar{Z} \rightarrow Z$ is a finite map that is an isomorphism away from the finite set of intersection points and such that $\mathcal{O}_Z \subset f_* \mathcal{O}_{\bar{Z}}$.  It follows (see \cite[0C1M, 0C1N]{stacks}) that $\Pic(Z)$ is a finite sequence of extensions of $\Pic(\bar{Z})$ by quotients of $(k,+)$ or $(k,*)$.  Lastly, $\Pic(\bar{Z}) = \oplus_j \Pic(r_j F_j)$, where each summand is built from finite-dimensional $k$-vector spaces and $\Pic(F_j) \cong \Z \oplus \Pic^0(F_j)$.  Since the $\Pic^0(F_j)$ are groups of $k$-points of abelian varieties over $k$, they are all torsion.

\begin{commentA}
Over large fields of characteristic $p$, the key to non-torsion Brauer groups is some non-simple-connectedness of the exceptional locus, either via a cycle of curves (which gives a $k^*$ because you "link up" two points on the same connected component) or via $\Pic(C)$ for $C$ of positive genus.
\end{commentA}

Taken together, all of this implies that $G$ and $\Pic(Y^{\text{h}})$ have equal rank.  Since $G \rightarrow \Hom(G,\Z)$ is injective, the first map in the excision sequence of class groups $0 \rightarrow G \rightarrow \Pic(Y^{\text{h}}) \rightarrow \Cl(R^{\text{h}}) \rightarrow 0$ is injective also. Therefore, the quotient $\Cl(R^{\text{h}})$ is a torsion group, as desired.
\end{proof}

\end{document}